\newcommand{\neutralize}[1]{\expandafter\let\csname c@#1\endcsname\count@}
\newcommand{\Nb}{\mathbb{N}}
\newcommand{\Nbz}{\mathbb{N}_0}
\newcommand{\Cb}{\mathbb{C}}
\newcommand{\Zb}{\mathbb{Z}}
\newcommand{\Qb}{\mathbb{Q}}
\newcommand{\Qbp}{\Qb_{\geq0}}
\newcommand{\Rb}{\mathbb{R}}
\newcommand{\Sb}{\mathbb{S}}
\newcommand{\Div}{\mathrm{div}}
\newcommand{\grad}{\mathrm{grad}}
\newcommand{\tr}{\mathrm{tr}}
\newcommand{\me}{\mathrm{e}}
\newcommand{\mi}{\mathrm{i}}
\newtheorem{theorem}{Theorem}[section]
\newtheorem{conjecture}[theorem]{Conjecture}
\newtheorem{corollary}[theorem]{Corollary}
\newtheorem{definition}[theorem]{Definition}
\newtheorem{keyid}[theorem]{Key Identity}
\newtheorem{lemma}[theorem]{Lemma}
\newtheorem{proposition}[theorem]{Proposition}
\newtheorem*{remark}{Remark}
\newtheorem{thm}{Conjecture}
\title{On multiplicity bounds for eigenvalues of the  clamped round plate}
\author{Dan Mangoubi and Daniel Rosenblatt}
\date{}
\begin{document}
\maketitle
\begin{abstract}
We ask whether the only multiplicities in the spectrum of the clamped round plate are trivial, i.e., whether all existing multiplicities are  due to the isometries of the sphere, or, equivalently,  whether any eigenfunction is separated. We prove
that any eigenfunction can be expressed as a sum of at most two separated ones, by showing that otherwise the corresponding  eigenvalue is algebraic,  contradicting  the Siegel-Shidlovskii theory. In two dimensions
  it follows that no eigenvalue is of multiplicity greater than four. The proof exploits a linear recursion  of order two for cross-product Bessel functions with  coefficients which are not even algebraic functions, though they do satisfy a non-linear algebraic recursion. 
\end{abstract}
\section{Introduction and background}
\subsection{The vibrating clamped round plate}
    In this paper we are concerned with
the vibrating clamped round  plate~\cite[ch. V\S6]{courant-hilbert}, that is, the following fourth order eigenvalue problem in the unit ball~$B\subset\Rb^d$.
    \begin{equation*}
    \text{(VP)}\qquad\left\{\begin{array}{rcl}
         \Delta^2u&=&\lambda u \quad \text{ in } B,\\ 
         u&=&0  \qquad \text{on } \partial B,\\ 
         \partial_n u&=&0  \qquad \text{on } \partial B.
                \end{array}\right.
\end{equation*} 
where $\Delta=\Div\circ\grad$ is the  Laplacian. A basis of eigenfunctions is given in spherical coordinates by the family
\begin{equation}
\label{eqn:eigen-basis}
    u_{l, k, j}(r,\phi)=\bigl(I^{sp}_l(w^{sp}_{l,k})J^{sp}_l(w^{sp}_{l,k}r)-J^{sp}_l(w^{sp}_{l,k})I^{sp}_l(w^{sp}_{l,k}r)\bigr)Y_{l, j}(\phi)
\end{equation}
where 
\[ J^{sp}_l(\rho)=\rho^{1-\frac{d}{2}}J_{l+\frac{d}{2}-1}(\rho),\quad I^{sp}_l(\rho)=\rho^{1-\frac{d}{2}}I_{l+\frac{d}{2}-1}(\rho)\]
denote the spherical Bessel function of the first kind and the modified spherical Bessel function of the first kind respectively, and where 
$(Y_{l, j})_{j=1}^{N_l}$ with $N_l=\binom{l+d-1}{d-1}-\binom{l+d-3}{d-1}$  form an orthonormal basis for  spherical harmonics of degree~$l$ on the sphere~$\Sb^{d-1}$. The value $w^{sp}_{l, k}$ is the~$k$-th positive root of the Wronskian

\begin{equation*}
    W^{sp}_{l} :=(I^{sp}_l)'J^{sp}_l-I^{sp}_l (J^{sp}_l)'=I^{sp}_{l+1}J^{sp}_{l}+I^{sp}_{l} J^{sp}_{l+1}=I^{sp}_{l-1}J^{sp}_l-I^{sp}_l J^{sp}_{l-1}\ .
\end{equation*}

The eigenvalue corresponding to an eigenfunction of the form~\eqref{eqn:eigen-basis} is~$\lambda=(w^{sp}_{l, k})^4$. The multiplicities in the spectrum which we call ``trivial'' are those due to the  multiplicities appearing in the spectrum of  the sphere~$\Sb^{d-1}$.
We are interested to know whether non-trivial multiplicities occur in the spectrum, i.e., whether an eigenvalue~$\lambda=w^{sp}_{l, k}$ can have multiplicity bigger than~$N_l$, or, equivalently,  whether any eigenfunction~$u$ is of the separated form \begin{equation}
\label{eqn:separated}
    u(r, \phi)=R(r)Y(\phi)
\end{equation}
where~$Y$ is a spherical harmonic.  The analogous problem for the vibrating round \emph{membrane} with Dirichlet boundary conditions was solved by Siegel in~\cite{siegel} (see~\cite{porter} for the odd dimensional case; cf.~\cite[\S 15.28]{watson},~\cite{siegel-book} and~\cite[p.~217]{shid-book}), proving that any eigenfunction is separated. To that end, he proved the deep fact that all non-zero roots of Bessel functions~$J_m$ where $m\in\Qb$ and $2m$ is not an odd integer are transcendental.
On the other hand, any eigenvalue of non-trivial multiplicity must be algebraic due to the algebraic recursion formula satisfied by Bessel functions.
In this paper we continue and further improve the bounds which were obtained in~\cite{lvov-man}.
  Ruling out  non-trivial multiplicities in the case of the free vibrating round membrane problem was achieved in~\cite{ashu2013} and~\cite{helf-sund}.  Bounds on the multiplicities of eigenvalues for annular domains with Dirichlet boundary conditions were studied in~\cite{wang-annuli}.

\subsection{Common roots of cross-product  Bessel functions}
One can write 
\[W^{sp}_l(\rho)=\rho^{2-d}W_{l+\frac{d}{2}-1}(\rho)\]
where 
\begin{equation}
 \label{eqn:W-def}
W_m:=I_m' J_m-I_m J_m'=I_{m+1}J_m+I_m J_{m+1}=I_{m-1}J_m -I_m J_{m-1}
\end{equation}
In particluar, $w^{sp}_{l, k}=w_{l+d/2-1, k}$ where $w_{m, k}$ denotes the $k$-th positive root of $W_m$. 
   Hence, the problem treated in this paper amounts to the question whether there exist $x_0>0$ and $m_1, m_2\in \frac{1}{2}\Nb_{0}$ differing by a non-zero integer such that  $W_{m_1}(x_0)=W_{m_2}(x_0)=0$, as it would imply that $\lambda=x_0^4$ is an eigenvalue of non-trivial multiplicity (at least~$N_{l_1}+N_{l_2}$ where $m_j=l_j+d/2-1$). This seems to be a difficult open problem.  In~\cite{lvov-man} it was shown that there do not exist $x_0>0$ and $m_1, m_2, m_3, m_4\in \Nb_0$ pairwise distinct  for which $W_{m_1}(x_0)=W_{m_2}(x_0)=W_{m_3}(x_0)=W_{m_4}(x_0)=0$. In two dimensions this implied a uniform bound, namely, six,  on the multiplicity. The proof was based on a~\emph{fourth} order recursion formula for the sequence $W_m$ with rational functions as coefficients.
One of the main goals of this paper is to eliminate the possibility that three functions~$W_m$ vanish simultaneously. More precisely,  we prove
\begin{theorem}
    \label{thm:mult3}
    There do not exist  $x_0>0$ and  $m_1,m_2,m_3\in \Qbp$ with $|m_i-m_j|\in\Nb$ for all $1\leq i<j\leq 3$   such that $W_{m_1}(x_0)=W_{m_2}(x_0)=W_{m_3}(x_0)=0$. Equivalently, any eigenfunction of the clamped disk is a sum of at most two ones of the separated form~\eqref{eqn:separated}. 
\end{theorem}

One corollary is
\begin{corollary}
\label{cor:dim2}
  Any eigenvalue  of the two dimensional clamped round  plate is of multiplicity at most four. In particular, the multiplicities are uniformly bounded.
\end{corollary}

\subsection{Schanuel's conjecture and the trivial multiplicity conjecture}
A natural conjecture is the following one.
\begin{conjecture}[Trivial multiplicity]
\label{conj:trivial-mult}
    There do not exist $x_0>0$ and $m_1, m_2\in \frac{1}{2}\Nb_0$ with $|m_1-m_2|\in\Nb$
    such that $W_{m_1}(x_0)=W_{m_2}(x_0)=0$. Equivalently, there are no non-trivial multiplicities in the spectrum of the  clamped round plate, or, alternatively, any eigenfunction of the clamped round plate is separated.
\end{conjecture}

In a similar vein to the work of Porter~\cite{porter} we explain in \S\ref{sec:schanuel} that  in odd dimensions 
a special case of the classical Schanuel conjecture of Transcendental Number Theory implies Conjecture~\ref{conj:trivial-mult}. For even dimensions, a Schanuel-type conjecture for Bessel functions which would imply Conjecture~\ref{conj:trivial-mult} is formulated.
\subsection{Idea of proof of main Theorem~\ref{thm:mult3}}
To prove Theorem~\ref{thm:mult3} we show that if there exist~$x_0>0$ and $m_1, m_2, m_3\in \Qbp$ pairwise differing by a non-zero integer, with $W_{m_1}(x_0)=W_{m_2}(x_0)=W_{m_3}(x_0)=0$, then $x_0$ must be algebraic. However, an immediate application of the Siegel-Shidlovskii theory shows that any positive root of the equation $W_m(x_0)=0$ is transcendental.
The main new idea in our proof with respect to~\cite{lvov-man} is that it is possible to exploit a~\emph{second} order linear recursion formula for the sequence~$W_m$ which has \emph{non-algebraic} coefficients. We make use of the fact that these coefficients satisfy an  \emph{algebraic non-linear} recursion formula of degree two. At a first step we show that each joint root~$x_0$ of  $W_{m}$ and $W_{m'}$ leads to an equation  of the form
\begin{equation}
\label{eqn:poly-f-eqn}
    P_{m, m'}(x_0, F_m(x_0))=0
\end{equation} where $P_{m, m'}(x, y)$ is a polynomial of degree two with respect to~$y$, and~$F_m$ is a quotient of successive modified Bessel functions (see Def.~\ref{def:besselq}). At a second step we prove that it is possible to  eliminate $F_m$ from a system of~\emph{any} two such equations, leading to a non-trivial polynomial equation for~$x_0$.

\subsection{Acknowledgments}
We are very grateful to Gal Binyamini, suggesting the possibility of eliminating~$F_m$ from our equations~\eqref{eqn:poly-f-eqn}.
We thank Or Kuperman for helpful discussions regarding the second order recurrence relation satisfied by the sequence~$W_m$. We are very grateful to the anonymous referee for helpful questions and comments which led us to better describe the background  to this work, add the higher dimensional case and the relation of the optimal bounds to Schanuel's conjecture.
 This paper
is part of D.\ R.'s research towards a PhD.\ degree, conducted
at the Hebrew University of Jerusalem.

\section{Bessel functions and their quotients}
Let $m\in\Qb$. The Bessel function $J_m$ is
defined  by the series
$$J_m(x)=\Bigl(\frac{x}{2}\Bigr)^{m}\sum_{k=0}^{\infty} \frac{(-1)^k}{k!\Gamma(m+k+1)}\Bigl(\frac{x}{2}\Bigr)^{2k}\ .$$ For the purpose of this paper we  consider the above series as a holomorphic function in the domain~$\Cb\setminus (-\infty, 0]$ which is real on the positive real axis.
Similarly, the modified Bessel function $I_m$ is given by
$$I_m(x)=\Bigl(\frac{x}{2}\Bigr)^{m}\sum_{k=0}^{\infty} \frac{1}{k!\Gamma(m+k+1)}\Bigl(\frac{x}{2}\Bigr)^{2k}\ .$$
Observe that $I_m(\mi x)=\mi^m J_m(x)$. If $m$ is a negative integer we ignore the terms for which $m+k+1$ is a pole of the~$\Gamma$-function. In view of this, $I_{-m}=I_m$ when~$m\in\Zb$.
\begin{lemma}
    \label{lem:I-positive}
    If $m\in\Zb$ or~$m\in\Qbp$ and $x>0$, then $I_m(x)>0$.
\end{lemma}
\begin{proof}
    For $m\geq 0$ all the terms of the power series are positive for $x>0$.
    If $m<0$ is an integer, we have $I_{-m}=I_m$.
\end{proof}
We record the following formulas for a few special cases which follow directly from the definitions. 
\begin{lemma}
\label{lem:JIonehalf}
\[J_{-1/2}(x)=\sqrt{\frac{2}{\pi x}}\cos x,\quad J_{1/2}(x)=\sqrt{\frac{2}{\pi x}}\sin x \]
\[ I_{-1/2}(x)= \sqrt{\frac{2}{\pi x}}\cosh(x),\quad I_{-1/2}(x)=\sqrt{\frac{2}{\pi x}}\sinh(x) \]
\end{lemma}
\begin{proposition}[{\cite[\S2.12, \S3.71]{watson}}]
\label{prop:besselrec}
Let $m\in\Qb$. The following recursions are satisfied.
\begin{equation*}
    \begin{split}
        J_{m+1}(x)&=\frac{2m}{x}J_m(x)-J_{m-1}(x)\\
        I_{m+1}(x)&=-\frac{2m}{x}I_m(x)+I_{m-1}(x)
    \end{split}
\end{equation*}
\end{proposition}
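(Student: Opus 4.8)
The plan is to verify both identities directly from the power-series definitions of $J_m$ and $I_m$ recalled above, using nothing beyond the functional equation $\Gamma(z+1)=z\,\Gamma(z)$ together with a shift of the summation index. Each of the two series converges absolutely and locally uniformly on $\Rb$ (in fact on all of $\mathbb{C}$), so every regrouping of terms by powers of $x$ and every reindexing performed below is legitimate for each fixed $x>0$; I will adopt the usual convention $1/\Gamma(n)=0$ for non-positive integers $n$, so that the formulas make sense for all integers $m$ at once.

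First I would treat the $J$-recursion. Writing $t=x/2$, one has $J_m(x)=\sum_{k\ge 0}\frac{(-1)^k}{k!\,\Gamma(m+k+1)}\,t^{m+2k}$, so that, after collecting the common power $t^{m+2k-1}$,
$$\frac{2m}{x}J_m(x)-J_{m-1}(x)=\sum_{k\ge 0}\frac{(-1)^k}{k!}\Bigl(\frac{m}{\Gamma(m+k+1)}-\frac{1}{\Gamma(m+k)}\Bigr)\,t^{m+2k-1}.$$
Using $\Gamma(m+k+1)=(m+k)\,\Gamma(m+k)$, the bracketed factor equals $\frac{m-(m+k)}{\Gamma(m+k+1)}=\frac{-k}{\Gamma(m+k+1)}$, so the $k=0$ term vanishes; substituting $j=k-1$ in the remaining sum turns it into $\sum_{j\ge 0}\frac{(-1)^j}{j!\,\Gamma(m+j+2)}\,t^{m+1+2j}$, which is precisely $J_{m+1}(x)$.

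The $I$-recursion is handled in exactly the same fashion, the only changes being that all signs in the series are positive and that the $\frac{2m}{x}$ term enters with the opposite sign: with $t=x/2$ as before,
$$-\frac{2m}{x}I_m(x)+I_{m-1}(x)=\sum_{k\ge 0}\frac{1}{k!}\Bigl(\frac{-m}{\Gamma(m+k+1)}+\frac{1}{\Gamma(m+k)}\Bigr)\,t^{m+2k-1},$$
and the same identity $\Gamma(m+k+1)=(m+k)\Gamma(m+k)$ reduces the bracket to $\frac{k}{\Gamma(m+k+1)}$; the $k=0$ term again drops out, and the reindexing $j=k-1$ gives $\sum_{j\ge 0}\frac{1}{j!\,\Gamma(m+j+2)}\,t^{m+1+2j}=I_{m+1}(x)$.

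No genuine obstacle arises: the argument is a bookkeeping exercise, the only points meriting explicit mention being the absolute convergence that justifies reorganizing the series by powers of $t$ and the elementary Gamma-function identity; the two cases differ only in their sign patterns and are settled by the same two lines of computation. An alternative, essentially equivalent, route would be to first establish the differentiation formulas $\frac{\dif}{\dif x}\bigl(x^{m}J_m\bigr)=x^{m}J_{m-1}$ and $\frac{\dif}{\dif x}\bigl(x^{-m}J_m\bigr)=-x^{-m}J_{m+1}$, together with their $I$-analogues (with a $+$ sign in the second), by term-by-term differentiation, and then add and subtract the resulting relations; but the direct computation above is shorter and self-contained.
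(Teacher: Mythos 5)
Your proof is correct. Note that the paper itself offers no argument for this proposition: it is quoted from Watson (chs.\ 2.12, 3.71) as a classical fact, so there is nothing internal to compare against. Your direct verification from the power series is the standard elementary route and is carried out accurately: the identity $\Gamma(m+k+1)=(m+k)\Gamma(m+k)$ collapses the bracket to $\mp k/\Gamma(m+k+1)$, the $k=0$ term dies, and the shift $j=k-1$ reproduces the series for $J_{m+1}$ (resp.\ $I_{m+1}$). You were also right to flag the convention $1/\Gamma(n)=0$ for $n\le 0$, which is exactly what makes the computation uniform in the integer $m$ (including negative $m$, where the paper's series definition has vanishing initial terms); with the reciprocal Gamma function understood as entire, the identity $1/\Gamma(z)=z/\Gamma(z+1)$ you use holds without exception. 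The alternative you sketch via the derivative formulas $\frac{\dif}{\dif x}(x^{\pm m}J_m)=\pm x^{\pm m}J_{m\mp 1}$ is essentially Watson's own derivation, so either version would serve; your series computation has the advantage of being self-contained and of making the integer-order conventions explicit, which is all this paper needs.
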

We will consider quotients of successive modified Bessel functions.
\begin{definition}
\label{def:besselq}
For $m\in\Qb$, let~$F_m$ be the following meromorphic function on~$\Cb\setminus(-\infty, 0]$.
$$F_m(x):=\frac{I_{m}(x)}{x I_{m-1}(x)}\ .$$
\end{definition}

The following identity, which can be viewed as a discrete Riccati equation will be important in the sequel.
\begin{keyid}
\label{lem:key}
For $m\in\Qb$,  we have
\begin{equation*}
    x^2 F_{m+1}(x)F_m(x)=1-2mF_m(x)
\end{equation*}
\end{keyid}
\begin{proof} From the definition of $F_m$ and Proposition~\ref{prop:besselrec} we have
    $$x^2F_{m+1}F_m=x^2\cdot\frac{I_{m+1}}{x I_m}\cdot\frac{I_m}{xI_{m-1}}=\frac{1}{I_{m-1}}\left(I_{m-1}-\frac{2m}{x}I_m\right)=1-2mF_m\ .$$
\end{proof}
\section{Second order recursion for cross products of Bessel functions}
Fix~$m\in\Qb$. The sequence~$(W_{m+n})_{n=0}^{\infty}$ satisfies a fourth order linear recurrence with non-constant coefficients in~$\Qb(x)$~(see~\cite{lvov-man}). However, it also satisfies a second order linear recurrence whose coefficients, while not even algebraic, satisfy themselves a quadratic recursion. 
We prove
\begin{theorem}
\label{thm:wrec}
    Let $m\in\Qb$. The following recursions formulae hold.
    \begin{equation*}
    \label{Wrec}
       W_{m+1}=2m F_{m} W_{m}-(1-2mF_{m})W_{m-1}
    \end{equation*}
    and
    \begin{equation*}
        W_{m-1}=2m G_m W_m-\bigl(1+2m G_m) W_{m+1}
    \end{equation*}
    where $G_m=1/(x^2F_{m+1})=I_{m}/(x I_{m+1})$.
\end{theorem}
\begin{proof}
On the one hand we have
by~\eqref{eqn:W-def} and Proposition~\ref{prop:besselrec}
\begin{equation*}
    \begin{split}
    W_{m-1}+W_{m+1}&= (I_m J_{m-1}+ I_{m-1} J_m)+(I_m J_{m+1}-I_{m+1}J_m)\\
    &=I_m  \frac{2m}{x}J_m + \frac{2m}{x}I_m J_m=\frac{4m}{x} I_m J_m\ .
    \end{split}
\end{equation*}

On the other hand, 
   \begin{equation*}
       \begin{split}
       2m F_m (W_m+W_{m-1}) = 2m \frac{I_m}{x I_{m-1}} (I_{m-1}J_m-I_m J_{m-1}+I_m J_{m-1}+I_{m-1} J_m)=\frac{4m}{x} I_m J_m\ .
       \end{split}
   \end{equation*}
   Comparing the preceding expressions gives the forward recursion formula.
   The backward recursion formula follows immediately from the forward one once we take into account Key Identity~\ref{lem:key}.
\end{proof}
\section{Rolling out the recursion}
\label{sec:roll-forward}
In this section we use the  recursion formula for~$W_m$ (Theorem~\ref{thm:wrec}) in order to express any element in the sequence in terms of two initial consecutive terms.
\begin{proposition}
\label{prop:rec-rolled-forward}
     Let $m\in\Qb$, and $n\in\Nbz$. There exist polynomials $A_{m,n}$, $B_{m,n}$,~$\tilde{B}_{m, n}$ and~$C_{m,n}\in \Qb[x]$ such that:
    \begin{equation}
  \begin{split}
        x^{2n}W_{m+n+1}=\bigl(A_{m,n}& F_{m}+x^2B_{m,n}+C_{m,n}F_{m}^{-1}\bigr)W_{m}+
        \\
        &\bigl(A_{m,n}F_{m}+\tilde{B}_{m,n}-C_{m,n}F_{m}^{-1}\bigr)W_{m-1}
    \end{split}
    \end{equation}
\end{proposition}
\begin{remark}
    Note that the coefficients in the preceding formula are of degree one in~$F_m$ and~$F_{m}^{-1}$.
\end{remark}
\begin{proof}
    We prove the claim by induction on $n$. The case $n=0$ follows from Theorem~\ref{thm:wrec}. For $n\geq 1$
    \begin{equation*}
    \begin{split}
        x^{2n}&W_{m+n+1}=x^2x^{2n-2}W_{(m+1)+(n-1)+1}=\\
        &\bigl(x^2A_{m+1,n-1}F_{m+1}+x^4B_{m+1,n-1}+x^2C_{m+1,n-1}F_{m+1}^{-1}\bigr) W_{m+1}\\
        +&\bigl(x^2A_{m+1,n-1}F_{m+1}+x^2\tilde{B}_{m+1,n-1}-x^2C_{m+1,n-1}F_{m+1}^{-1}\bigr)W_{m}
    \end{split}
    \end{equation*}
    We substitute $W_{m+1}$ using Theorem~\ref{thm:wrec} and Key Identity~\ref{lem:key}.
    \begin{equation*}
    \begin{split}
        x^{2n}&W_{m+n+1}=\\ 
        &\bigl(2mx^2A_{m+1,n-1} F_m F_{m+1} + 2mx^4B_{m+1,n-1}F_m + 2mx^2C_{m+1,n-1}F_m F_{m+1}^{-1}\bigr)W_{m}\\
        -&\bigl(x^2 A_{m+1,n-1}F_{m+1}+x^4B_{m+1,n-1}+x^2C_{m+1,n-1}F_{m+1}^{-1}\bigr)x^2F_{m}F_{m+1}W_{m-1}\\
        +&\bigl(x^2A_{m+1,n-1}F_{m+1}+x^2\tilde{B}_{m+1,n-1}-x^2C_{m+1,n-1}F_{m+1}^{-1}\bigr)W_{m}
    \end{split}
    \end{equation*}
    Applying Key Identity~\ref{lem:key} and collecting terms gives
      \begin{equation*}
    \begin{split}
        x^{2n}& W_{m+n+1}=\\ 
        \Bigl( &2m A_{m+1,n-1}(1-2mF_m)+2m x^4B_{m+1,n-1}F_m \\
         &- x^2C_{m+1,n-1}(1-2m F_m) F_{m+1}^{-1}+ A_{m+1,n-1}(F_m^{-1}-2m)+x^2\tilde{B}_{m+1,n-1}\Bigr)W_{m}
        \\ -\Bigl(&x^2A_{m+1,n-1}F_{m+1}(1-2mF_m) +x^4B_{m+1,n-1}(1-2mF_m)+x^4C_{m+1,n-1}F_{m}\Bigr)W_{m-1}
    \end{split}
    \end{equation*}
    
    Applying once more Key Identity~\ref{lem:key} and collecting terms gives
     \begin{equation*}
    \begin{split}
        x^{2n}&W_{m+n+1}=\\ 
        \Bigl(&\bigl(-4m^2A_{m+1, n-1}+2mx^4B_{m+1, n-1}-x^4C_{m+1, n-1}\bigr)F_m \\
        &+x^2\tilde{B}_{m+1, n-1}+A_{m+1,n-1}F_{m}^{-1}\Bigr)W_{m}
        \\ 
        +\Bigl(&2m A_{m+1, n-1}(1-2m F_m)-A_{m+1, n-1} ( F_{m}^{-1}-2m)+\left(2m x^4B_{m+1, n-1}-x^4C_{m+1, n-1}\right)F_m \\
        & -x^4B_{m+1, n-1} \Bigr)W_{m-1}
    \end{split}
    \end{equation*}
and finally, 
      \begin{equation*}
    \begin{split}
        x^{2n}&W_{m+n+1}=\\ 
        \Bigl(&\bigl(-4m^2A_{m+1, n-1}+2mx^4B_{m+1, n-1}-x^4C_{m+1, n-1}\bigr) F_m\\
        &+x^2\tilde{B}_{m+1, n-1}+ A_{m+1,n-1}F_{m}^{-1}\Bigr)W_{m}
        \\ 
        +\Bigl(&\bigl(-4m^2A_{m+1, n-1}+2m x^4B_{m+1, n-1}-x^4C_{m+1, n-1}\bigr)F_m\\
        & +4m A_{m+1, n-1}-x^4B_{m+1, n-1} -A_{m+1, n-1} F_{m}^{-1}\Bigr)W_{m-1}
    \end{split}
    \end{equation*}
    which is of the desired form.
    \end{proof}

    As an immediate consequence of the above computation, we obtain the following lemma.
    \begin{lemma}
    \label{lem:coeff-rec}
        Let $A_{m,n}$, $B_{m,n}$, $\tilde{B}_{m,n}$, and $C_{m,n}$ be as in Proposition~\ref{prop:rec-rolled-forward}. Then, the following recurrence relations hold.
        \begin{enumerate}[label=(\roman*)]
        \item \label{item:A} $A_{m, 0}=2m$; $A_{m,n}=-4m^2A_{m+1,n-1}+2mx^4B_{m+1,n-1}-x^4C_{m+1,n-1}$
             \item $B_{m, 0} = 0$; $B_{m,n}=\tilde{B}_{m+1,n-1}$
      \item $\tilde{B}_{m, 0}=-1$; $\tilde{B}_{m,n}=4m A_{m+1,n-1}-x^4B_{m+1,n-1}$
        \item $C_{m, 0}=0$; $C_{m,n}=A_{m+1,n-1}$
   
    \end{enumerate}
    \end{lemma}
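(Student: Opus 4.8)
The plan is to read the four recursions directly off the inductive construction carried out in the proof of Proposition~\ref{prop:rec-rolled}; the present lemma records the bookkeeping done there rather than requiring a new argument.

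First I would treat the base case $n=0$. Specializing the identity of Proposition~\ref{prop:rec-rolled} to $n=0$ gives
\[
    W_{m+1}=\bigl(A_{m,0}F_m+x^2B_{m,0}+C_{m,0}F_m^{-1}\bigr)W_m+\bigl(A_{m,0}F_m+\tilde B_{m,0}-C_{m,0}F_m^{-1}\bigr)W_{m-1},
\]
which must agree with $W_{m+1}=2mF_mW_m+(2mF_m-1)W_{m-1}$ from Theorem~\ref{thm:wrec}. The induction takes the latter coefficients as the definition of $A_{m,0},B_{m,0},\tilde B_{m,0},C_{m,0}$, so $A_{m,0}=2m$, $B_{m,0}=0$, $\tilde B_{m,0}=-1$, $C_{m,0}=0$, as claimed.

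For $n\ge1$ I would compare the final display in the proof of Proposition~\ref{prop:rec-rolled} --- in which $x^{2n}W_{m+n+1}$ appears with the coefficients of $F_mW_m$, of $F_m^{-1}W_m$, of the $F_m^{\pm1}$-free part of $W_m$, of $F_mW_{m-1}$, of $F_m^{-1}W_{m-1}$, and of the $F_m^{\pm1}$-free part of $W_{m-1}$ all written out as explicit polynomials in $A_{m+1,n-1}$, $B_{m+1,n-1}$, $\tilde B_{m+1,n-1}$, $C_{m+1,n-1}$ and $x$ --- with the template of Proposition~\ref{prop:rec-rolled}. Matching the coefficient of $F_m$ in front of $W_m$ gives (i); matching the $F_m^{\pm1}$-free part of the $W_m$ coefficient, which equals $x^2\tilde B_{m+1,n-1}$, with $x^2 B_{m,n}$ gives (ii); matching the $F_m^{\pm1}$-free part of the $W_{m-1}$ coefficient gives (iii); matching the coefficient of $F_m^{-1}$ in front of $W_m$ gives (iv). The one point that needs a genuine glance at the computation is its internal consistency: the calculation in Proposition~\ref{prop:rec-rolled} automatically produces the \emph{same} polynomial $-4m^2A_{m+1,n-1}+2mx^4B_{m+1,n-1}-x^4C_{m+1,n-1}$ as the coefficient of $F_m$ in front of both $W_m$ and $W_{m-1}$, and \emph{opposite} coefficients $\pm A_{m+1,n-1}$ as the coefficient of $F_m^{-1}$; this is precisely what allows the right-hand side to be written in the form of Proposition~\ref{prop:rec-rolled}.

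I expect no real obstacle here: all the substance is in the proof of Proposition~\ref{prop:rec-rolled}, and the lemma is the observation that that proof is a simultaneous recursion for the quadruple $(A_{m,n},B_{m,n},\tilde B_{m,n},C_{m,n})$. In particular no uniqueness of the representation in Proposition~\ref{prop:rec-rolled} is needed: the polynomials in question are by definition the ones produced by that induction, so (i)--(iv) hold for them by construction.
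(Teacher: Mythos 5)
Your proposal is correct and matches the paper's approach exactly: the paper states the lemma "as an immediate consequence of the above computation," i.e., by reading the base case off Theorem~\ref{thm:wrec} and the recursive step off the final display in the proof of Proposition~\ref{prop:rec-rolled}, which is precisely what you do. Your remark that the calculation automatically yields matching $F_m$-coefficients and opposite $F_m^{-1}$-coefficients in front of $W_m$ and $W_{m-1}$ — so no uniqueness-of-representation argument is needed — is the right observation to make explicit.
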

As a corollary, we have
    \begin{lemma}
    \label{lem:coeff-mod}
    Let $m\in \Qb$ and $n\in\Nb$. Then,
$$
            A_{m,n}\equiv 2(-4)^{n}(m+n)\prod_{k=0}^{n-1}(m+k)^2\mod x^4\ .
$$
    \end{lemma}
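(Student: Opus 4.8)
The plan is to prove the claimed congruence for $A_{m,n}$ modulo $x^4$ by induction on $n$, using the recursive relations in Lemma~\ref{lem:coeff-rec}. The key observation is that working modulo $x^4$ kills every term in the recursions that carries an explicit factor of $x^4$, so the recursions collapse dramatically. Indeed, from Lemma~\ref{lem:coeff-rec}\ref{item:A} we get $A_{m,n}\equiv -4m^2 A_{m+1,n-1}\pmod{x^4}$, since the terms $2mx^4B_{m+1,n-1}$ and $x^4 C_{m+1,n-1}$ both vanish mod $x^4$. This is a clean one-step recursion in $A$ alone, and the other three sequences $B$, $\tilde B$, $C$ drop out of the picture entirely once we reduce mod $x^4$.

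Concretely, I would argue as follows. For the base case $n=1$, apply the reduced recursion with the initial data $A_{m+1,0}=2(m+1)$ from Lemma~\ref{lem:coeff-rec}\ref{item:A}, giving $A_{m,1}\equiv -4m^2\cdot 2(m+1)=-8m^2(m+1)\pmod{x^4}$, which matches $2(-4)^1(m+1)\prod_{k=0}^{0}(m+k)^2 = -8(m+1)m^2$. For the inductive step, assume the formula holds for $n-1$ (with $n\ge 2$), i.e. $A_{m+1,n-1}\equiv 2(-4)^{n-1}(m+n)\prod_{k=0}^{n-2}(m+1+k)^2\pmod{x^4}$. Note the product $\prod_{k=0}^{n-2}(m+1+k)^2 = \prod_{j=1}^{n-1}(m+j)^2$. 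Then
\begin{equation*}
A_{m,n}\equiv -4m^2 A_{m+1,n-1}\equiv -4m^2\cdot 2(-4)^{n-1}(m+n)\prod_{j=1}^{n-1}(m+j)^2 = 2(-4)^n(m+n)\,m^2\prod_{j=1}^{n-1}(m+j)^2\pmod{x^4},
\end{equation*}
and since $m^2\prod_{j=1}^{n-1}(m+j)^2 = \prod_{k=0}^{n-1}(m+k)^2$, this is exactly the claimed expression. This completes the induction.

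There is essentially no serious obstacle here: the only point requiring a little care is the bookkeeping of index shifts — the recursion for $A_{m,n}$ feeds on $A_{m+1,n-1}$, so the inductive hypothesis must be invoked at $m+1$ rather than $m$, and correspondingly the product range shifts from $\{0,\dots,n-2\}$ based at $m+1$ to $\{1,\dots,n-1\}$ based at $m$; reabsorbing the extra factor $m^2$ then restores the range $\{0,\dots,n-1\}$. One should also double-check the base case $n=1$ directly against $A_{m,0}=2m$ pushed through one step of the full (unreduced) recursion, to be sure the $x^4$-terms really are the only thing being discarded. Beyond that, the argument is a routine induction, and the statement follows.
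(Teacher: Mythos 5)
Your proof is correct and takes essentially the same approach as the paper: the paper's proof is the one-line remark that the claim ``follows immediately from Lemma~\ref{lem:coeff-rec}, part~\ref{item:A},'' and you have simply unpacked that into the explicit induction, noting that mod $x^4$ the recursion collapses to $A_{m,n}\equiv -4m^2A_{m+1,n-1}$ and carrying out the index bookkeeping.
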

    In particular, if $m\not\in \Zb$ or $m>0 $ or $m< -n$, then
    $A_{m,n}\not\equiv 0 \mod{x}$.
    \begin{proof}
    The proof follows immediately from Lemma~\ref{lem:coeff-rec}, part~\ref{item:A}.
    \end{proof}

The next proposition rolls the recursion backward and shows the connection to the forward recursion.
\begin{proposition}
\label{prop:rec-rolled-backward}
     Let $m\in\Qb$, and $n\in\Nbz$. With the same notations as in~Prop.~\ref{prop:rec-rolled-forward} 
    \begin{equation}
  \begin{split}
        (-1)^{n+1}x^{2n}W_{m-n-1}=
        &\bigl(A_{-m,n}G_{m}+x^2B_{-m,n}+C_{-m,n}G_{m}^{-1}\bigr)W_{m}+
        \\
        &-\bigl(A_{-m,n}G_{m}+\tilde{B}_{-m,n}-C_{-m,n}G_m^{-1}\bigr)W_{m+1}
    \end{split}
    \end{equation}
\end{proposition}
\begin{proof}[Sketch of Proof]
   In case $m\in\Zb$ the proposition follows immediately from Prop.~\ref{prop:rec-rolled-forward}, since $W_{-m}=(-1)^m W_{m}$ and $F_{-m}=G_m$.
 For any~$m\in\Qb$ one follows the same pattern of proof of Proposition~\ref{prop:rec-rolled-forward}. The Key Identity~\ref{lem:key} is replaced by the identity 
    \[ x^2 G_m G_{m-1}=1+2m G_m\ .\]
    We need also to use the recursion relations for the coefficients, given in~Lemma~\ref{lem:coeff-rec}.
\end{proof}

\section{Proof of Theorem~\ref{thm:mult3}}
    We recall
    \begin{proposition}[\cite{lvov-man}]
    \label{prop:m-m+1}
        Let $m\in\Qbp$ or $m\in\Zb$. The functions $W_m$ and $W_{m+1}$ have no joint positive roots.
    \end{proposition}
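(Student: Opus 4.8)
\emph{Proof proposal.} The plan is to reduce the statement to the classical fact that two consecutive Bessel functions of the first kind have no common positive zero. The starting point is that the two expressions for $W_m$ recorded in~\eqref{eqn:W-def} interact nicely: replacing $m$ by $m+1$ in the second one gives $W_{m+1}=I_mJ_{m+1}-I_{m+1}J_m$, while the first one reads $W_m=I_{m+1}J_m+I_mJ_{m+1}$. Adding and subtracting these two identities yields
\[
W_m+W_{m+1}=2I_mJ_{m+1},\qquad W_m-W_{m+1}=2I_{m+1}J_m.
\]

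Next, suppose for contradiction that $W_m(x_0)=W_{m+1}(x_0)=0$ for some $x_0>0$. Then both right-hand sides above vanish at $x_0$, so $I_m(x_0)J_{m+1}(x_0)=0$ and $I_{m+1}(x_0)J_m(x_0)=0$. Here I would invoke positivity of the modified Bessel functions: the power series defining $I_k$ has non-negative Taylor coefficients, and $I_{-k}=I_k$ for integer $k$, so $I_k(x_0)>0$ for every integer $k$ and every $x_0>0$. Cancelling these non-vanishing factors forces $J_m(x_0)=J_{m+1}(x_0)=0$.

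Finally I would rule out a common positive zero of $J_m$ and $J_{m+1}$. Using the three-term recursion of Proposition~\ref{prop:besselrec} in both directions, the vanishing $J_m(x_0)=J_{m+1}(x_0)=0$ propagates to $J_k(x_0)=0$ for all integers $k$, contradicting the classical identity $\sum_{k\in\Zb}J_k(x_0)^2=1$; alternatively, $J_m(x_0)=J_{m+1}(x_0)=0$ together with $\tfrac{\dif}{\dif x}\bigl(x^{-m}J_m\bigr)=-x^{-m}J_{m+1}$ gives $J_m(x_0)=J_m'(x_0)=0$, whence $J_m\equiv 0$ by uniqueness for the Bessel ODE on $(0,\infty)$. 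Either way we reach a contradiction, proving the proposition.

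The argument is short and I do not expect a genuine obstacle; the only real inputs beyond bookkeeping are the positivity $I_k>0$ on $(0,\infty)$ and the non-coincidence of zeros of consecutive $J$'s. (A heavier alternative, closer in spirit to the rest of the paper, would feed a joint zero of $W_m$ and $W_{m+1}$ into the recursion $W_{m+1}=2mF_mW_m+(2mF_m-1)W_{m-1}$ of Theorem~\ref{thm:wrec}, obtaining $(2mF_m(x_0)-1)W_{m-1}(x_0)=0$ and then splitting into the cases $W_{m-1}(x_0)=0$ and $2mF_m(x_0)=1$; but the Bessel factorization above is cleaner and self-contained.)
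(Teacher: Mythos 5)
Your proof is correct and follows essentially the same route as the paper: you use the same decomposition $W_m\pm W_{m+1}=\pm 2I_{m\mp\ldots}J_{\ldots}$ to reduce to a common positive zero of $J_m$ and $J_{m+1}$, and then rule that out via a derivative identity plus uniqueness for the Bessel ODE, which is exactly the paper's closing step (the paper uses $J_m'=(m/x)J_m-J_{m+1}$ where you use $\frac{\dif}{\dif x}(x^{-m}J_m)=-x^{-m}J_{m+1}$, an equivalent move). The only difference is that you make explicit the positivity $I_k>0$ on $(0,\infty)$, which the paper leaves tacit.
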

    \begin{proof}
       Assume $W_m(x_0)=W_{m+1}(x_0)=0$ for some $x_0>0$. Observe that 
       $$W_{m+1}+W_m=I_{m}J_{m+1}-I_{m+1}J_{m}+I_{m+1}J_m+I_mJ_{m+1}=2I_mJ_{m+1}$$
       and 
       $$W_{m+1}-W_m=I_{m}J_{m+1}-I_{m+1}J_{m}-I_{m+1}J_m-I_mJ_{m+1}=-2I_{m+1}J_{m}$$
       If $m\in\Zb$ or~$m\geq 0$, then $I_m(x_0)>0$ (see Lemma~\ref{lem:I-positive}). In view of this observation and  the above formulae, the assumption implies that $J_m(x_0)=J_{m+1}(x_0)=0$. However, this is impossible since it would imply that $J_m'(x_0)=(m/x_0)J_m(x_0)-J_{m+1}(x_0)$ is also zero, while $J_m$ satisfies a second order linear ODE.
    \end{proof}
    A direct consequence of the preceding proposition and Proposition \ref{prop:rec-rolled-forward} is
    \begin{corollary}
        \label{cor:x0-equations}
    Let~$n\in\Nb_0$ and $x_0>0$. 
    \begin{enumerate}[label=(\alph*)]
    \item \label{+n}
    If~$x_0$ is a joint root of $W_{m}$, and $W_{m+n+2}$ where $m\in \Qbp$, then
    \begin{equation*}
        \label{eqn:+n}
            A_{m+1, n}(x_0)F_{m+1}(x_0)^2+x_0^2 B_{m+1,n}(x_0)F_{m+1}(x_0)+C_{m+1,n}(x_0)=0
    \end{equation*}
    \item \label{-n}
    If~$x_0$ is a joint root of $W_{m}$, and $W_{m-n-2}$ where $m-1\in \Qbp$, then 
    \begin{equation*}
            \begin{split}
            A_{-m+1, n}(x_0) &\bigl(x_0^2 F_{m+1}(x_0)+2m\bigr)^2\\ &+x_0^4B_{-m+1, n}(x_0)\left(x_0^2F_{m+1}(x_0)+2m\right)+x_0^4C_{-m+1, n}(x_0)=0
            \end{split}
    \end{equation*}
    \end{enumerate}
    \end{corollary}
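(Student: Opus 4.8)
The plan is to read both identities directly off Proposition~\ref{prop:rec-rolled} by specializing its formula at the joint zero~$x_0$, discarding the terms that vanish there, and invoking Proposition~\ref{prop:m-m+1} to guarantee that the one surviving factor is nonzero.

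For part~\ref{+n} I would apply Proposition~\ref{prop:rec-rolled} with $m$ replaced by $m+1$ (and $n$ unchanged), so that its left-hand side becomes $x^{2n}W_{m+n+2}$:
\begin{equation*}
\begin{split}
x^{2n}W_{m+n+2}={}&\bigl(A_{m+1,n}F_{m+1}+x^2B_{m+1,n}+C_{m+1,n}F_{m+1}^{-1}\bigr)W_{m+1}\\
&+\bigl(A_{m+1,n}F_{m+1}+\tilde B_{m+1,n}-C_{m+1,n}F_{m+1}^{-1}\bigr)W_{m}.
\end{split}
\end{equation*}
Evaluating at~$x_0$, the left-hand side vanishes since $W_{m+n+2}(x_0)=0$, and the last summand vanishes since $W_m(x_0)=0$; as $W_{m+1}(x_0)\neq 0$ by Proposition~\ref{prop:m-m+1}, the coefficient of $W_{m+1}$ must vanish at~$x_0$, i.e.
\begin{equation*}
A_{m+1,n}(x_0)F_{m+1}(x_0)+x_0^2B_{m+1,n}(x_0)+C_{m+1,n}(x_0)F_{m+1}(x_0)^{-1}=0.
\end{equation*}
Finally $F_{m+1}(x_0)=I_{m+1}(x_0)/(x_0I_m(x_0))>0$, because $I_k(x)>0$ for every integer~$k$ and every $x>0$ (all coefficients in the defining power series are positive, and $I_{-k}=I_k$); multiplying the last display by $F_{m+1}(x_0)$ gives the asserted quadratic relation.

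For part~\ref{-n} I would first record the reflection symmetry
\begin{equation*}
W_{-k}=(-1)^kW_k\qquad(k\in\Zb),
\end{equation*}
which follows at once from $J_{-k}=(-1)^kJ_k$, $I_{-k}=I_k$ and~\eqref{eqn:W-def}. Hence $x_0$ is a joint zero of $W_m$ and $W_{m-n-2}$ exactly when it is a joint zero of $W_{-m}$ and $W_{-m+n+2}=W_{(-m)+n+2}$, and applying part~\ref{+n} with $-m$ in place of $m$ yields
\begin{equation*}
A_{-m+1,n}(x_0)F_{-m+1}(x_0)^2+x_0^2B_{-m+1,n}(x_0)F_{-m+1}(x_0)+C_{-m+1,n}(x_0)=0.
\end{equation*}
By Lemma~\ref{lem:Fneg-m}, $x_0^2F_{-m+1}(x_0)=x_0^2F_{m+1}(x_0)+2m$, so $F_{-m+1}(x_0)=\bigl(x_0^2F_{m+1}(x_0)+2m\bigr)/x_0^2$; substituting this and clearing the denominator $x_0^4$ turns the last display into precisely the identity of part~\ref{-n}.

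The computation is short and I do not anticipate a serious obstacle; it is genuinely a corollary. The two points that warrant care are: (i) the index bookkeeping in part~\ref{-n}, i.e.\ checking that $m\mapsto -m$ carries $W_{m-n-2}$ to $W_{(-m)+n+2}$ so that part~\ref{+n} applies verbatim with first index $-m+1$ and with the \emph{same} polynomials $A_{\bullet,n},B_{\bullet,n},C_{\bullet,n}$ of Proposition~\ref{prop:rec-rolled}; and (ii) the verification that every division performed is legitimate for $x_0>0$, namely that $x_0$, $W_{m+1}(x_0)$, $W_{-m+1}(x_0)$, $F_{m+1}(x_0)$ and $F_{-m+1}(x_0)$ are all nonzero — which follows from Proposition~\ref{prop:m-m+1} together with the positivity of $I_k$ on $(0,\infty)$.
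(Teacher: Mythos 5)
Your proof is correct and follows essentially the same route as the paper: replace $m$ by $m+1$ in Proposition~\ref{prop:rec-rolled} and kill the surviving coefficient of $W_{m+1}$ via Proposition~\ref{prop:m-m+1}, then deduce part~\ref{-n} from part~\ref{+n} by $m\mapsto -m$ together with Lemma~\ref{lem:Fneg-m}. You also helpfully make explicit the reflection symmetry $W_{-k}=(-1)^kW_k$ and the positivity of $I_k$ on $(0,\infty)$, both of which the paper's two-line proof leaves implicit but uses tacitly.
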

\begin{proof}
Part~\ref{+n} follows from Prop.~\ref{prop:rec-rolled-forward} with $m$ replaced by $m+1$, taking into account Prop.~\ref{prop:m-m+1}.
To prove part~\ref{-n} observe first that  from Prop.~\ref{prop:rec-rolled-backward} with $m$ replaced by~$m-1$ and Prop.~\ref{prop:m-m+1} we get 
\[A_{-m+1, n}(x_0)G_{m-1}(x_0) +x_0^2 B_{-m+1, n}(x_0)+C_{-m+1, n}(x_0)G_{m-1}^{-1}(x_0)=0\ .\]
Multiply this equation by~$x_0^4 G_{m-1}$, while noting that $x^2G_{m-1}=x^2 F_{m+1}+2m$.
\end{proof}

\begin{proof}[Proof of Theorem \ref{thm:mult3}]
 Assume~$x_0>0$ and~$0\leq m_1<m_2<m_3$  are such that $W_{m_1}(x_0)=W_{m_2}(x_0)=W_{m_3}(x_0)=0$. By Proposition~\ref{prop:m-m+1} we can write $m_1=m_2-l-2$, $m_2=m$ and $m_3=m_2+n+2$ with $l, m, n\in \Nb_0$. 
 By Corollary~\ref{cor:x0-equations} setting $x=x_0$ solves a system 
   \begin{equation} 
   \label{eqn:x-F-alg-dep}
    \begin{cases}
         A_{m+1,n}(x)F_{m+1}(x)^2+ x^2B_{m+1, n}(x)F_{m+1}(x)+ C_{m+1, n}(x)=0  \\
         A_{-m+1,l}(x)\bigl(x^2F_{m+1}(x)+2m\bigr)^2\!\!+\!x^4B_{-m+1, l}(x)(x^2 F_{m+1}(x)+2m)\!+x^4C_{-m+1, l}(x)=0
     \end{cases}
     \end{equation}
Eliminating $F_{m+1}^2$ from the preceding system we obtain that $x_0$ is a root of an equation of the form
   $$\bigl(4m A_{m+1,n}(x)A_{-m+1,l}(x)+x^4P_1(x)\bigr)x^2F_{m+1}(x)+ 4m^2A_{m+1, n}(x)A_{-m+1, l}(x)+x^4P_2(x)=0$$
    for some polynomials $P_1, P_2\in \Qb[x]$, depending on $l, m, n$.

    By Lemma~\ref{lem:coeff-mod} and the fact that $m> l+1$ the polynomial $4mA_{m+1, n}A_{-m+1, l}+x^4P_1$ is not zero.  Hence, if it vanishes at the point~$x_0$ we get that $x_0$ is algebraic. Otherwise, using the preceding equation to eliminate $F_{m+1}$ from the first equation in~\eqref{eqn:x-F-alg-dep} leads to an equation of the form 
    $$16m^4A_{m+1, n}(x_0)^3A_{-m+1, l}(x_0)^2 +x_0^4 P_3(x_0)=0$$
    with $P_3\in \Qb[x]$ depending on $l, m, n$.
 From Lemma~\ref{lem:coeff-mod} it follows that $x_0$ is also algebraic in this case.
 We have shown that~$x_0$ is algebraic. However, this contradicts Prop.~\ref{prop:Bessel-root-trans}.
\end{proof}

\begin{proposition}[{\cite[cor.\ 6.4]{lvov-man}}]
\label{prop:Bessel-root-trans}
Let $x_0 >0$ be algebraic, and let $m\in\Qbp$.  Then,  $W_m(x_0)\neq 0$.
\end{proposition}
\begin{proof}
In case $2m$ is not an odd integer,
it was proved by Siegel in~\cite{siegel} that  $J_m,J'_m,I_m,I'_m$ are algebraically independent functions  over $\mathbb{C}(x)$. Observe that
the following ODE is satisfied
$$\begin{pmatrix}
J_m\\
J'_m\\
I_m\\
I'_m
\end{pmatrix}'=\begin{pmatrix}
0 & 1 & 0 & 0\\
-1+\frac{m^2}{x^2} & -\frac{1}{x} & 0 & 0\\
0 & 0 & 0 & 1\\
0 & 0 & 1+\frac{m^2}{x^2} & -\frac{1}{x}
\end{pmatrix}\cdot\begin{pmatrix}
J_m\\
J'_m\\
I_m\\
I'_m
\end{pmatrix}$$
Hence, if $x_0$ is algebraic, it follows from the Siegel-Shidlovskii theory, that $J_m(x_0)$, $J'_m(x_0)$, $I_m(x_0)$, $I'_m(x_0)$ are algebraically independent and in particular $W_m(x_0)\neq0$.

    In case~$2m$ is odd the statement in the proposition is simpler, since it is essentially a special case of Lindemann-Weierstrass Theorem~\ref{LW}. Indeed,
    Suppose that $W_m(x_0)=0$. Then, $\frac{I_{m+1}}{I_m}(x_0)=-\frac{J_{m+1}}{J_m}(x_0)$. Hence, combining this with 
    Lemma~\ref{lem:tr-deg-ind} iterated we get
    \[\begin{split}
        1&\geq \tr. \deg\Big(\frac{I_{m+1}}{I_m}(x_0)\Big)=\tr. \deg\Big(x_0, \frac{I_{m+1}}{I_m}(x_0),\frac{J_{m+1}}{J_m}(x_0)\Big)
        \\& =\tr. \deg \Big(x_0, \frac{I_{1/2}}{I_{-1/2}}(x_0), \frac{J_{1/2}}{J_{-1/2}}(x_0)\Big)
        \stackrel{\text{Lem.~\ref{lem:JIonehalf}}}{=}\tr.\deg \big(\tanh(x_0), \tan(x_0)\big)\\ &=\tr.\deg (\me^{x_0}, \me^{\mi x_0})\stackrel{\text{L.-W.}}{=}2\ .
    \end{split}\]
    which is absurd.
\end{proof}
\begin{lemma} Let $m\in\Qbp$  and $x>0$. Then, 
    \label{lem:tr-deg-ind}
  \[  \tr. \deg \Big(x, \frac{I_{m+1}}{I_m}(x), \frac{J_{m+1}}{J_m}(x)\Big) = \tr. \deg \Big(x, \frac{I_{m}}{I_{m-1}}(x), \frac{J_{m}}{J_{m-1}}(x)\Big)\ .\]
  Here, if a  denominator vanishes we regard the corresponding quotient as algebraic.
\end{lemma}
\begin{proof}
    The lemma follows immediately from the identities
    \[\frac{I_{m+1}}{I_m}(x)=-\frac{2m}{x}+\frac{I_{m-1}}{I_m}(x),\quad \frac{J_{m+1}}{J_m}(x)=\frac{2m}{x}-\frac{J_{m-1}}{J_m}(x)\]
\end{proof}
\begin{theorem}[Lindemann-Weierstrass~\cites{lindemann, weierstrass}]
\label{LW}
Let $\alpha_1,\ldots,\alpha_n\in\Cb$ be algebraic numbers that are linearly independent over $\Qb$. Then, the numbers $e^{\alpha_1},\ldots,e^{\alpha_n}$ are algebraically independent.
\end{theorem}
\section{Schanuel's conjecture and the trivial multiplicities conjecture}
\label{sec:schanuel}
We observe that the classical Schanuel's conjecture from Transcendental Number Theory implies that in odd dimensions non-trivial multiplicities in the spectrum of the clamped round plate do not exist. 
This leads us to  a formulation of a Schanuel-type conjecture which would eliminate non-trivial multiplicities in even dimensions.
First, let us record the following 
special case of Schanuel's conjecture (see~\cite[ch. III, historical note, p.~30]{lang-book}
\begin{conjecture}[Schanuel's conjecture - special case]
\label{conj:schanuel}
Let $x\in\Rb$ be non-zero. Then 
\[\tr. \deg(x, \me^{x}, \me^{\mi x})\geq 2\ .\] 
\end{conjecture}

\begin{proposition}
\label{prop:conjectures-implied}
Conjecture~\ref{conj:schanuel} implies Conjecture~\ref{conj:trivial-mult} in odd dimensions.
\end{proposition}
\begin{proof} If $W_{m}(x_0)=W_{m+n}(x_0)=0$ for some $m\in\Qbp$ and $n\in \Nb_0$,  then the first equation in~\eqref{eqn:x-F-alg-dep} shows that~$x_0$ and $I_{m+1}(x_0)/I_{m}(x_0)$ are algebraically dependent. Moreover,
$\frac{I_{m+1}}{I_m}(x_0)+\frac{J_{m+1}}{J_m}(x_0)=0$. It follows that
\[ \tr. \deg\Big(x_0, \frac{I_{m+1}}{I_m}(x_0), \frac{J_{m+1}}{J_m}(x_0)\Big)\leq 1\ .\] 
From Lemma~\ref{lem:tr-deg-ind} we conclude that 
\[ \tr. \deg\Big(x_0, \frac{I_{1/2}}{I_{-1/2}}(x_0), \frac{J_{1/2}}{J_{-1/2}}(x_0)\Big)\leq 1\ .\]
Hence, by Lemma~\ref{lem:JIonehalf} we obtain that
\[ \tr. \deg\big(x, \tanh(x), \tan(x)\big)\leq 1\ .\]
On the other hand, Conjecutre~\ref{conj:schanuel} shows that
\[ \tr. \deg\big(x, \tanh(x), \tan(x)\big)\geq 2\ ,\]
and we get a contradiction.
\end{proof}

The argument in the preceding proof shows that the following conjecture implies the non-trivial multiplicity conjecture in even dimensions.
\begin{conjecture}[Schanuel-type conjecture]
\label{conj:schanuel-bessel}
Let $x>0$. Then
\[\tr. \deg\Big(x, \frac{I_1}{I_0}(x), \frac{I_1}{I_0}(\mi x)\Big)\geq 2\ .\]
\end{conjecture}
\begin{remark} Here, as in Lemma~\ref{lem:tr-deg-ind}, if $I_0(\mi x)=0$ we interpret the quotient $(I_1/I_0)(\mi x)$ as an algebraic number.
\end{remark}
\printbibliography
\vspace{3ex}

\textsc{Einstein Institute of Mathematics, Edmond J. Safra Campus,
The Hebrew University of Jerusalem, Jerusalem 9190401, Israel}

\emph{Email address: }\texttt{\textbf{dan.mangoubi@mail.huji.ac.il}}
\vspace{3ex}

\textsc{Einstein Institute of Mathematics, Edmond J. Safra Campus,
The Hebrew University of Jerusalem, Jerusalem 9190401, Israel}

\emph{Email address: }\texttt{\textbf{daniel.rosenblatt2@mail.huji.ac.il}}
\end{document}